\documentclass[12 pt]{amsart}
\usepackage{amsmath,amssymb,amsbsy,amsfonts,amsthm,latexsym,amsopn,amstext,amsxtra,euscript,amscd,color,mathrsfs, tikz}

\PassOptionsToPackage{hyphens}{url}\usepackage{hyperref}

\usepackage[capbesideposition=outside,capbesidesep=quad]{floatrow}

\restylefloat{table}
\restylefloat{table}
            
\usepackage{multirow,caption}
            
\usepackage{amscd}
\usepackage{color,enumerate}

\newcommand{\RNum}[1]{\lowercase\expandafter{\romannumeral #1\relax}}

\usepackage[colorinlistoftodos,prependcaption,textsize=tiny]{todonotes}

\newtheorem{thm}{Theorem}[section]
\newtheorem{lem}[thm]{Lemma}

\newtheorem{conj}[thm]{Conjecture}

\newtheorem{thm-con}[thm]{Theorem-Conjecture}
\numberwithin{equation}{section}

\theoremstyle{definition}

\newcommand{\F}{\mathbb{F}} 
\def\Tr{{\rm Tr}}

\begin{document}
\title[A proof of a conjecture on permutation pentanomials]{A proof of a conjecture on permutation pentanomials}  
\author[K. Mallick]{Krishna Mallick}
\address{Department of Computer Science and Engineering, IIT Kharagpur, Kharagpur  721302, India}
\email{krishna.mallickmath@gmail.com}

\author[M. Pal]{Mohit Pal}
\address{Department of Informatics, University of Bergen, PB 7803, N-5020, Bergen, Norway}
\email{mathmohit@outlook.com}

\maketitle 
\begin{abstract}
    In this paper, we use finite fields and linear algebra methods to resolve a conjecture by T. Zhang, L. Zheng, H. Wang, J. Peng and Y. Li (Finite Fields Appl. 110 (2026) 102743) concerning permutation pentanomials.
\end{abstract}

\section{Introduction}

Let $\F_{q^n}$ be the finite field with $q^n$ elements, where $q$ is a prime power and $n$ is a positive integer. We denote by $\F_{q^n}^*$, the multiplicative cyclic group of non-zero elements of $\F_{q^n}$. It is well-known that every mapping from $\F_{q^n}$ into itself can be uniquely represented by a polynomial in $\F_{q^n}[X]$ of degree less than $q^n$. Accordingly, we may use the terms function and polynomial interchangeably. A polynomial $f\in \F_{q^n}[X]$ is called a permutation polynomial if the associated mapping $c \mapsto f(c)$ permutes the elements of $\F_{q^n}$. Permutation polynomials have been an active area of research since they have applications in various areas such as coding theory~\cite{DH13, YLC07}, cryptography~\cite{LM84, SH98} and combinatorial designs~\cite{DY06}. For more background material on permutation polynomials, we refer to Chap. 7 of~\cite{LN97}.

Permutation polynomials with a few terms are of particular interest due to their simple algebraic structures. The simplest class of polynomials is given by monomial permutation polynomials $X^e$, which permute $\F_{q^n}$ if and only if $\gcd(e, q^n-1) = 1$. Unlike monomials, the classification of permutation polynomials having a few terms, such as binomials, trinomials, quadrinomials, and pentanomials, is non-trivial and has not yet been completely resolved.  For a survey of recent developments on permutation binomials and trinomials over finite fields, the reader may refer to~\cite{Hou15BT}. We refer the interested reader to~\cite[Appendix]{GHLKP26} for recent advancements on permutation quadrinomials and pentanomials.

Recently, Zhang et al.~\cite{Zhang2026} studied permutation pentanomials of the form
\[
f(X)= \epsilon_0 X^{d_0} +L(\epsilon_1 X^{d_1}+\epsilon_2 X^{d_2}),
\]
where $L(X) = X^q+X$ and $d_0 \in \{ 1, 2, 4\}$. Based on the numerical results, the authors proposed the following conjecture.

\begin{conj}\cite[Conjecture 1]{Zhang2026} \label{C1}
    Let $m, k$ be positive integers with $q=2^m$. Then
    \[
    f_1(X)= X^{2^k+1}+X^{2^kq^2+1}+X^{q+2^k}+X^{q^2+2^k}+X^{2^k q+1}
    \]
    and
    \[
    f_2(X) = X^{2^k+1}+X^{(2^k+1)q}+X^{q+2^k}+ X^{q^2+2^k}+ X^{2^k q^2+q}
    \]
    are permutation polynomials over $\F_{q^3}$ if and only if $\gcd(2^k+1, q-1)=1$.
\end{conj}
It is the intent of our paper to completely prove this conjecture. The paper is organized as follows. In Section~\ref{S2}, we recall some basic notions and related results that will be used in the proof of the conjecture. In Section~\ref{S3}, we present a proof of Conjecture~\ref{C1}. Finally, we summarize the paper with some concluding remarks in Section~\ref{S4}.

\section{Preliminaries}\label{S2}

Throughout this paper, let $q=2^m$, where $m$ is a positive integer. Let $\Tr$ be the relative trace map from $\F_{q^3}$ to $\F_q$ defined as $\Tr(X):=X+X^q+X^{q^2}$. Let $C_0$ be a set defined as
\[
C_0 := \{X \in \F_{q^3} \mid \Tr(X)=0\}.
\]
It is well-known that $C_0$ is a $2$ dimensional vector space over $\F_q$. For any $u \in \F_q$, define the coset 
\[
C_u:= u+C_0= \{u+X \mid X \in C_0\}.
\]
We use the convention that $C_0$ is the coset corresponding to $u=0$. One may note that for any $Y \in C_u$, we have $Y= u+X$ for some $X \in C_0$ and hence 
\[
\Tr(Y) = \Tr(u+X) = \Tr(u)+\Tr(X)= u\Tr(1) =u.
\]
It is straightforward to see that $\lvert C_u \rvert= q^2$ for all $u \in \F_q$. Also, for any $u, v \in \F_q$ with $u \neq v$, we have $C_u \cap C_v = \phi$ as any $Y \in C_u \cap C_v$ would imply that $Y=u+X_1$ and $Y=v+X_2$ for some $X_1, X_2 \in C_0$. Therefore, we have
\[
 u+X_1 = v+X_2 \implies  \Tr(u+X_1) = \Tr(v+X_2) \implies u=v,
\]
which is a contradiction. Thus, we have the following partition of $\F_{q^3}$.
\[
\F_{q^3} = \bigcup_{u \in \F_q} C_u.
\]
In the next section, we shall use this partition of $\F_{q^3}$ to prove the permutation property of $f_1$ and $f_2$. We shall also use the following lemma in the proof of Conjecture~\ref{C1}. 
\begin{lem}~\cite[Lemma 2.1]{C99}~\label{L1}
   Let $\alpha, \beta$ be positive integers. Then
    \begin{equation*}
    \gcd(2^{\alpha}+1, 2^{\beta}-1) =
        \begin{cases}
         1~&~\mbox{if}~v_2(\beta) \leq v_2(\alpha),\\
         2^{\gcd(\alpha,\beta)}+1~&~\mbox{if}~v_2(\beta) > v_2(\alpha),\\
        \end{cases}
    \end{equation*}
where $v_2(\alpha), v_2(\beta)$ are the highest powers of $2$ that divide $\alpha, \beta$, respectively.
\end{lem}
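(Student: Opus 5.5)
The plan is to reduce everything to the elementary fact $\gcd(2^a-1,2^b-1)=2^{\gcd(a,b)}-1$, which follows from the Euclidean algorithm on exponents. Let $d=\gcd(2^{\alpha}+1,2^{\beta}-1)$ and $g=\gcd(\alpha,\beta)$. Two preliminary observations drive the argument. First, $d$ is odd. Second, since $2^{\alpha}+1$ divides $2^{2\alpha}-1$, we get $d \mid \gcd(2^{2\alpha}-1,2^{\beta}-1)=2^{\gcd(2\alpha,\beta)}-1$. The dichotomy in the statement comes from computing $\gcd(2\alpha,\beta)$. If $v_2(\beta)\le v_2(\alpha)$, the extra factor $2$ contributes nothing, so $\gcd(2\alpha,\beta)=g$. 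If $v_2(\beta)>v_2(\alpha)$, then $\gcd(2\alpha,\beta)=2g$.

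In the case $v_2(\beta)\le v_2(\alpha)$, we have $d\mid 2^{g}-1$. Since $g\mid\alpha$, also $2^g-1\mid 2^{\alpha}-1$, so $d$ divides both $2^{\alpha}+1$ and $2^{\alpha}-1$. Hence $d\mid 2$, and oddness of $d$ forces $d=1$.

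In the case $v_2(\beta)>v_2(\alpha)$, I will prove the two divisibilities $2^g+1\mid d$ and $d\mid 2^g+1$.
\begin{itemize}
\item For $2^g+1\mid d$: the quotient $\alpha/g$ is odd, because $v_2(g)=v_2(\alpha)$. Using $x^{r}+1=(x+1)(x^{r-1}-\cdots+1)$ for odd $r$ gives $2^g+1\mid 2^{\alpha}+1$. The quotient $\beta/g$ is even, so $2^{2g}-1\mid 2^{\beta}-1$, and therefore $2^g+1\mid 2^{\beta}-1$.
\item For $d\mid 2^g+1$: we know $d\mid 2^{2g}-1=(2^g-1)(2^g+1)$. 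Any common divisor of $d$ and $2^g-1$ divides both $2^{\alpha}-1$ and $2^{\alpha}+1$, hence divides $2$, so it equals $1$. Thus $\gcd(d,2^g-1)=1$ and $d\mid 2^g+1$.
\end{itemize}
Together these give $d=2^{g}+1$.

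No step is a genuine obstacle. The only point needing care is the $2$-adic bookkeeping. This means checking that $\gcd(2\alpha,\beta)$ equals $g$ or $2g$ according to the comparison of $v_2(\alpha)$ and $v_2(\beta)$. It also means checking that $\alpha/g$ is odd exactly in the second case. Both follow by writing $\alpha=2^{a}\alpha'$ and $\beta=2^{b}\beta'$ with $\alpha',\beta'$ odd, so that $g=2^{\min(a,b)}\gcd(\alpha',\beta')$.
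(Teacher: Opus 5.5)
Your proof is correct and complete. The reduction $d\mid 2^{\gcd(2\alpha,\beta)}-1$ via $2^{\alpha}+1\mid 2^{2\alpha}-1$ is valid. The $2$-adic computation showing that $\gcd(2\alpha,\beta)$ equals $g$ or $2g$ according to the comparison of $v_2(\alpha)$ and $v_2(\beta)$ is right. So is the observation that $\gcd(d,2^g-1)$ divides $\gcd(2^{\alpha}+1,2^{\alpha}-1)=1$, which gives both the value $1$ in the first case and $d\mid 2^g+1$ in the second.

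The paper gives no proof of this lemma; it quotes it directly from \cite{C99}. So the only comparison is between a citation and your self-contained argument. Your route is the standard elementary one and rests only on $\gcd(2^a-1,2^b-1)=2^{\gcd(a,b)}-1$. It makes the paper self-contained at the cost of a few lines, whereas the citation buys brevity.

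Note also how the paper actually uses the lemma: only through the equivalence $\gcd(2^k+1,2^m-1)=1\iff v_2(m)\le v_2(k)$ and its analogues for $q^3-1$ and for $k'=k+2m$. For that equivalence, your first case plus the divisibility $2^g+1\mid d$ from the second case already suffices, since the latter forces $d\ge 3$. The reverse divisibility $d\mid 2^g+1$ is needed only for the exact value $2^g+1$.
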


\section{A proof of the conjecture}\label{S3}
In this section, we give a proof of Conjecture\ref{C1}. We prove the permutation property of $f_1$ and $f_2$, separately. The following theorem proves the first part of Conjecture~\ref{C1}.

\begin{thm}\label{T}
     Let $m, k$ be positive integers with $q=2^m$. Then
    \[
    f_1(X)= X^{2^k+1}+X^{2^kq^2+1}+X^{q+2^k}+X^{q^2+2^k}+X^{2^k q+1}
    \]
    is a permutation polynomial over $\F_{q^3}$ if and only if $\gcd(2^k+1, q-1)=1$.
\end{thm}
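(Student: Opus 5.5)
The plan rests on an identity for $f_1$. Write $Q=2^k$ and $u=\Tr(X)\in\F_q$. The five monomials of $f_1$ are $X\cdot X^Q$, $X\cdot X^{Qq^2}$, $X^q\cdot X^{Q}$, $X^{q^2}\cdot X^Q$ and $X\cdot X^{Qq}$. Since $Q$ is a power of $2$, $\Tr(X)^Q=X^Q+X^{Qq}+X^{Qq^2}$, and these terms regroup as
\[
f_1(X)=X\,\Tr(X)^Q+X^Q\bigl(\Tr(X)+X\bigr)=X^{Q+1}+uX^Q+u^QX=(X+u)^{Q+1}+u^{Q+1}.
\]
We have $\Tr(u)=3u=u$, so $W:=X+u$ satisfies $\Tr(W)=0$. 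Thus $X\mapsto W$ is a bijection from $C_u$ onto $C_0$. By the partition $\F_{q^3}=\bigcup_{u\in\F_q}C_u$ of Section~\ref{S2}, $f_1$ becomes the map
\[
\Phi:\ C_0\times\F_q\to\F_{q^3},\qquad (W,u)\mapsto W^{Q+1}+u^{Q+1},
\]
and $f_1$ is a permutation if and only if $\Phi$ is injective.

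\emph{Necessity.} Suppose $\gcd(Q+1,q-1)>1$. Then there is $\zeta\in\F_q$ with $\zeta\neq1$ and $\zeta^{Q+1}=1$. Take $0\neq u\in\F_q$. Since $\Tr(u)=u$, we get $W=0$ and $f_1(u)=u^{Q+1}$. Likewise $f_1(\zeta u)=(\zeta u)^{Q+1}=u^{Q+1}$. This is a collision, so $f_1$ is not a permutation.

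\emph{Sufficiency.} Assume $\gcd(Q+1,q-1)=1$. By Lemma~\ref{L1} this means $v_2(m)\le v_2(k)$. Since $v_2(3m)=v_2(m)$, Lemma~\ref{L1} also gives $\gcd(Q+1,q^3-1)=1$. Hence $Y\mapsto Y^{Q+1}$ permutes both $\F_q$ and $\F_{q^3}$.

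Suppose $\Phi(W_1,u_1)=\Phi(W_2,u_2)$. Then $W_1^{Q+1}+W_2^{Q+1}=c:=u_1^{Q+1}+u_2^{Q+1}\in\F_q$. If $c=0$, the two permutation properties give $W_1=W_2$ and $u_1=u_2$. So the key claim to prove is:
\[
W_1,W_2\in C_0,\quad W_1^{Q+1}+W_2^{Q+1}\in\F_q \;\Longrightarrow\; W_1^{Q+1}+W_2^{Q+1}=0.
\]

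\emph{The key claim (main obstacle).} My first attempt is the "linear algebra" route. Assume $c\neq0$; then $W_1\neq0$. Put $r=W_2/W_1$.
\begin{itemize}
\item If $r\in\F_q$, then $W_1^{Q+1}(1+r^{Q+1})=c\in\F_q^*$. This makes $W_1^{Q+1}\in\F_q^*$. Because $Y\mapsto Y^{Q+1}$ permutes both fields and $\F_q$ is mapped onto itself, it follows that $W_1\in\F_q$. But then $\Tr(W_1)=W_1=0$, a contradiction.
\item If $r\notin\F_q$, then the two $\F_q$-linear conditions $\Tr(W_1)=0$ and $\Tr(rW_1)=0$ cut out a $1$-dimensional $\F_q$-subspace of $\F_{q^3}$. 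Hence $W_1=s\,w_r$ with $s\in\F_q$ and $w_r$ explicit, for instance $w_r$ built from the cross product of $(1,1,1)$ and $(r,r^q,r^{q^2})$ in conjugate coordinates.
\end{itemize}
In the second case the condition becomes $w_r^{Q+1}(1+r^{Q+1})\in\F_q$, a statement about $r$ alone. I would then use the three Galois conjugates of this equation together with $W+W^q+W^{q^2}=0$ to eliminate and reach a contradiction. Writing $Y=W^q$, one has $W^{q^2}=W+Y$, and for instance $\Tr(W^{Q+1})=W^QY+WY^Q$, which is a form of degree $Q+1$ in $W,Y$. The aim is to reduce the condition to an equation such as $(W_1/W_2)^{Q-1}$ or $(\cdot)^{Q+1}$ lying in $\F_q$, and then apply the gcd hypothesis once more. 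This elimination is the step I expect to be hardest and most computational.

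If the elimination becomes unmanageable, the fallback is counting. Since the domain and codomain of $\Phi$ both have $q^3$ elements, it suffices to show $\Phi$ is surjective. For each $b\in\F_{q^3}$ one would show there is exactly one $u\in\F_q$ with $\Tr\bigl((b+u^{Q+1})^{1/(Q+1)}\bigr)=0$.
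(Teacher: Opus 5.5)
\textbf{Verdict: there is a genuine gap.} Your argument is correct up to the key claim, but the key claim, which is the whole of sufficiency, is not proved.

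\textbf{What is correct.} Your reduction is the same as the paper's. You show $f_1(X)=(X+\Tr(X))^{Q+1}+\Tr(X)^{Q+1}$, so $f_1(u+W)=W^{Q+1}+u^{Q+1}$ with $W\in C_0$. You also upgrade $\gcd(Q+1,q-1)=1$ to $\gcd(Q+1,q^3-1)=1$ via Lemma~\ref{L1}, and you handle the subcase $W_2/W_1\in\F_q$ correctly. Your necessity argument, $f_1(u)=u^{Q+1}=f_1(\zeta u)$, is also correct. Keep it: the paper's proof only treats the \emph{if} direction.

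\textbf{The gap.} The key claim is that $W_1,W_2\in C_0$ with $W_1^{Q+1}+W_2^{Q+1}\in\F_q$ forces the sum to be $0$. For $W_2/W_1\notin\F_q$ you only say that eliminating over the Galois conjugates should reach a contradiction. You never produce the resulting equation or a reason it has no solution. The counting fallback only restates surjectivity. So as written, sufficiency is not proved.

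\textbf{How the paper closes it.} Since $c\in\F_q$ iff $c+c^q=0$, your key claim is equivalent to injectivity on $C_0$ of $g(X)=X^{Q+1}+X^{q(Q+1)}$.

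First, $g(X)\neq 0$ for $0\neq X\in C_0$; otherwise $X^{(q-1)(Q+1)}=1$ forces $X\in\F_q^*$. Next, substituting $X^{q^2}=X+X^q$ gives $g(X)^q=X^{Q+1}+X^{Q+q}+X^{Qq+1}$, hence
\[
g(X)^{q-1}=\frac{1+r+r^{Q}}{1+r^{Q+1}},\qquad r=X^{q-1}.
\]
So the useful parameter is $X^{q-1}$, which determines $X$ up to $\F_q^*$. It replaces your ratio $W_2/W_1$ together with a kernel vector $w_r$.

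Suppose $g(X_1)=g(X_2)$ and put $r=X_1^{q-1}$, $s=X_2^{q-1}$.
\begin{enumerate}
\item If $r=s$, then $X_1=\lambda X_2$ with $\lambda\in\F_q^*$ and $\lambda^{Q+1}=1$, so $\lambda=1$.
\item If $r\neq s$, clear denominators and divide by $(r+s)^{Q+1}$. This yields $Z^{Q}+Z+1=0$ with $Z=(1+r+rs)/(r+s)\in\F_{q^3}$. Then $Z^{Q^2}=Z$, so $Z\in\F_{2^{\gcd(2k,3m)}}$. Because $v_2(3m)\le v_2(k)$, this field is $\F_{2^{\gcd(k,3m)}}$, whence $Z^{Q}=Z$, a contradiction.
\end{enumerate}
In the main case the hypothesis is not used as injectivity of $Y\mapsto Y^{Q+1}$, which is what your sketch anticipates. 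It is used as the insolubility in $\F_{q^3}$ of this Artin--Schreier-type equation.
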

\begin{proof}
It is easy to observe that
\begin{equation*}
    \begin{split}
        f_1(X) &= X^{2^k+1} +X^{2^k}(X+X^q+X^{q^2}) +X(X^{2^k}+ X^{2^kq}+ X^{2^kq^2})   \\
        &= X^{2^k+1}+X^{2^k}\Tr(X) + X\left(\Tr(X)\right)^{2^k}\\
        &= X^{2^k+1}+X^{2^k}\Tr(X) + X\left(\Tr(X)\right)^{2^k} + \left(\Tr(X)\right)^{2^k+1} + \left(\Tr(X)\right)^{2^k+1} \\
        &= \left(X+\Tr(X)\right)^{2^k+1} + \left(\Tr(X)\right)^{2^k+1}.
    \end{split}
\end{equation*}
We may note that for any $Y \in C_u$, we have a unique $ X \in C_0$ such that $Y=u+X$. Thus, we have
\begin{equation*}
    \begin{split}
        f_1(Y) &= f_1(u+ X)\\
        &= \left(u+ X+\Tr(u+ X)\right)^{2^k+1} + \left(\Tr(u+ X)\right)^{2^k+1} \\
        &=  X^{2^k+1} + u^{2^k+1}.
    \end{split}
\end{equation*}
We first assume that $\gcd(2^k+1, q-1)=1$. Our aim is to show that $f_1$ is a permutation of $\F_{q^3}$. From Lemma~\ref{L1}, it is easy to see that
\begin{equation*}
    \begin{split}
        \gcd(2^k+1, q-1)=1  &\iff v_2(m) \leq v_2(k)\\
        &\iff v_2(3m) \leq v_2(k)\\
        &\iff \gcd(2^k+1, q^3-1)=1.
    \end{split}
\end{equation*}
Thus, $X^{2^k+1}$ is injective on $\F_{q^3}$. It is easy to observe that the restriction of the map $f_1$ on the coset $C_u$ is injective. As for any $Y_1, Y_2 \in C_u$, we have unique $X_1, X_2 \in C_0$ such that $Y_1=X_1+u$ and $Y_2=X_2+u$, moreover
\[
f_1(Y_1)=f_1(Y_2) \implies X_1^{2^k+1}+u^{2^k+1}=X_2^{2^k+1}+u^{2^k+1} \implies X_1=X_2 \implies Y_1=Y_2.
\]
Since $u$ is arbitrary, the map $f_1$ is injective on $C_u$ for all $u \in \F_q$. To prove the permutation property of $f_1$, it is sufficient to show that the sets $f_1(C_u):= \{f_1(Y) \mid Y \in C_u \}$ are disjoint. On the contrary, let us assume that $Z \in f_1(C_u) \cap f_1(C_v)$ for some $u, v \in \F_q$ and $u \neq v$. Then, we have $Z= X_1^{2^k+1} + u^{2^k+1}$ and $Z= X_2^{2^k+1} + v^{2^k+1}$  for some $ X_1, X_2 \in C_0$. Therefore, we have
\begin{equation}\label{TEq1}
    X_1^{2^k+1} + u^{2^k+1} = X_2^{2^k+1} + v^{2^k+1}
        \iff X_1^{2^k+1} + X_2^{2^k+1}=u^{2^k+1}+v^{2^k+1}.
\end{equation}
Now, since $\gcd(2^k+1, q-1)=1$, we have $u^{2^k+1} + v^{2^k+1} := w  \in \F_q^*$. We shall show that there does not exist any $ (X_1, X_2) \in C_0 \times C_0$ such that 
\begin{equation}\label{TEq2}
    X_1^{2^k+1} + X_2^{2^k+1} =w.
\end{equation}
Since $w\in \F_q^*$ neither of $X_1, X_2$ can be $0$. Now raising power $q$ to Equation~\eqref{TEq2} and adding it to Equation~\eqref{TEq2}, we have
\begin{equation}\label{TEq3}
    X_1^{2^k+1}+X_1^{q(2^k+1)} = X_2^{2^k+1} + X_2^{q(2^k+1)}.
\end{equation}
Now, we shall show that Equation~\eqref{TEq3} has only a trivial solution $X_1=X_2$, or in other words the map $g(X)= X^{2^k+1}+X^{q(2^k+1)}$ is injective on $C_0$. It is easy to see that $g(0)=0$. Next, we show that $g(X)\neq 0$ for all $X \in C_0 \backslash \{0\}$. Assume that $g(X)=0$ for some $X \in C_0 \backslash \{0\}$. Then we have
\begin{equation*}
\begin{split}
      & X^{2^k+1}(1+X^{(q-1)(2^k+1)})=0\\
      \implies &  X^{(q-1)(2^k+1)}=1\\
       \implies & X^{q-1}=1~\quad~\mbox{(as}~\gcd(2^k+1, q^3-1)=1)\\
        \implies & X\in \F_q^*,\\
\end{split}
\end{equation*}
which is a contradiction, as $C_0 \cap \F_q^*= \phi.$ Thus $g(X)\neq 0$ for all $X \in C_0 \backslash \{0\}$. Now, raising power $q-1$ to Equation~\eqref{TEq3}, we have
\begin{equation}\label{TEq4}
\begin{split}
     & (X_1^{2^k+1}+X_1^{q(2^k+1)})^{q-1} = (X_2^{2^k+1} + X_2^{q(2^k+1)})^{q-1}\\
     \implies & \frac{X_1^{q(2^k+1)}+X_1^{q^2(2^k+1)}}{X_1^{2^k+1} + X_1^{q(2^k+1)}} = \frac{X_2^{q(2^k+1)}+X_2^{q^2(2^k+1)}}{X_2^{2^k+1} + X_2^{q(2^k+1)}} \\
      \implies & \frac{X_1^{2^k+1}+X_1^{2^k+q}+X_1^{2^kq+1}}{X_1^{2^k+1} + X_1^{q(2^k+1)}} = \frac{X_2^{2^k+1}+X_2^{2^k+q}+X_2^{2^kq+1}}{X_2^{2^k+1} + X_2^{q(2^k+1)}}~\mbox{(as}~X_i^{q^2}=X_i^q+X_i, i \in \{1,2\}) \\
      \implies & \frac{X_1^{2^k+1}(1+X_1^{q-1}+X_1^{2^k(q-1)})}{X_1^{2^k+1}(1 + X_1^{(q-1)(2^k+1)})} = \frac{X_2^{2^k+1}(1+X_2^{q-1}+X_2^{2^k(q-1)})}{X_2^{2^k+1}(1 + X_2^{(q-1)(2^k+1)})}\\
       \implies & \frac{1+r+r^{2^k}}{1 + r^{2^k+1}} = \frac{1+s+s^{2^k}}{1 + s^{2^k+1}} \\
       \implies & r+r^{2^k}+s^{2^k+1}+rs^{2^k+1}+r^{2^k}s^{2^k+1}=s+s^{2^k}+r^{2^k+1}+r^{2^k+1}s+r^{2^k+1}s^{2^k} \\
        \implies & r+s+(r+s)^{2^k}+r^{2^k+1}+s^{2^k+1}+rs(r+s)^{2^k}+r^{2^k}s^{2^k}(r+s)=0,
\end{split}
\end{equation}
where $r=X_1^{q-1}$ and $s=X_2^{q-1}$. Now, we shall consider two cases, namely, $r+s=0$ and $r+s \neq 0$.

\textbf{Case 1.} Let $r+s=0$. In this case, we have $X_1^{q-1}=X_2^{q-1} \implies X_1= wX_2$ for some $w \in \F_q^*$. Putting the value of $X_2$ in Equation~\eqref{TEq3}, we have
\begin{equation}\label{TEq5}
    X_1^{2^k+1}+X_1^{q(2^k+1)} = w^{2^k+1} (X_1^{2^k+1}+X_1^{q(2^k+1)}) \implies  w^{2^k+1}=1.
\end{equation}
Since $\gcd(2^k+1, q^3-1)=1$, Equation~\eqref{TEq5} implies that $w=1$ and hence $X_1=X_2$. It is straightforward to see that $X_1=X_2$ cannot be a solution of Equation~\eqref{TEq2}.

\textbf{Case 2.} Let $r+s\neq 0$. In this case, dividing Equation~\eqref{TEq4} by $(r+s)^{2^k+1}$, we have
\begin{equation}\label{TEq6}
\begin{split}
     Z^{2^k}+Z+1=0.
\end{split}
\end{equation}
where $\displaystyle Z=\frac{1+r+rs}{r+s} \in \F_{q^3}$. Raising power $2^k$ to Equation~\eqref{TEq6} and adding it to Equation~\eqref{TEq6}, we have
\[
Z^{2^{2k}}+Z=0 \implies Z \in \F_{2^{\gcd(2k,3m)}}.
\]
Since $\gcd(2^k+1, q^3-1)=1$, we have $v_2(3m) \leq v_2(k)$ and hence $v_2(3m) < v_2(2k)$ and hence $\gcd(2k,3m)=\gcd(k,3m).$ But 
$Z \in \F_{2^{\gcd(k,3m)}} \implies Z^{2^k}+Z=0$, which is a contradiction to Equation~\eqref{TEq6}. This completes the proof.
\end{proof}

The following theorem proves the second part of Conjecture~\ref{C1}.
\begin{thm}\label{T1}
Let $m, k$ be positive integers with $q=2^m$. Then
\[
f_2(X) = X^{2^k+1} + X^{2^kq+q} + X^{q+2^k}+ X^{q^2+2^k}+ X^{2^kq^2+q}
\]
is a permutation polynomial over $\F_{q^3}$ if and only if $\gcd(2^k+1, q-1)=1$.
\end{thm}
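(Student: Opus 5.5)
The plan is to follow the same coset strategy as in the proof of Theorem~\ref{T}, after first finding a closed form for $f_2$ in terms of $\Tr$. Write $T=\Tr(X)$. Grouping terms gives $X^{2^k+1}+X^{q+2^k}+X^{q^2+2^k}=X^{2^k}T$. The remaining two terms give $X^{2^kq+q}+X^{2^kq^2+q}=X^q(T^{2^k}+X^{2^k})$. Adding and subtracting $T^{2^k+1}$ should then yield
\[
f_2(X)=(X+T)^{2^k}(X^q+T)+T^{2^k+1}.
\]
For $Y=u+X\in C_u$ with $X\in C_0$ we have $Y+\Tr(Y)=X$ and $Y^q+\Tr(Y)=X^q$. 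Hence
\[
f_2(u+X)=X^{2^k+q}+u^{2^k+1}.
\]

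\textbf{Necessity.} Suppose $\gcd(2^k+1,q-1)\neq 1$. Then there is $u\in\F_q\setminus\{1\}$ with $u^{2^k+1}=1$. Taking $X=0$ gives $f_2(u)=u^{2^k+1}=1=f_2(1)$, so $f_2$ is not a permutation. The same argument also handles the ``only if'' direction for $f_1$.

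\textbf{Sufficiency.} Assume $\gcd(2^k+1,q-1)=1$, i.e.\ $v_2(m)\le v_2(k)$ by Lemma~\ref{L1}, and put $e=2^k+q$.

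First, $X^e$ is injective on $\F_{q^3}$. Indeed, $e\equiv q\,(2^{k+2m}+1)\pmod{q^3-1}$, so it suffices that $\gcd(2^{k+2m}+1,2^{3m}-1)=1$, i.e.\ $v_2(3m)\le v_2(k+2m)$ by Lemma~\ref{L1}. A short $2$-adic check gives this: if $v_2(k)=v_2(m)$ then $v_2(k+2m)=v_2(m)$, and if $v_2(k)>v_2(m)$ then $v_2(k+2m)\ge v_2(m)+1$. Consequently $f_2$ is injective on each coset $C_u$.

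Second, the images $f_2(C_u)$ must be pairwise disjoint. Suppose $u\neq v$ and put $w=u^{2^k+1}+v^{2^k+1}\in\F_q^*$. We must rule out $X_1^e+X_2^e=w$ with $X_1,X_2\in C_0$; note both $X_i$ are nonzero. Adding the $q$-th power of this equation to itself gives $g(X_1)=g(X_2)$, where $g(X)=X^e+X^{eq}$. On $C_0$ we have $X^{q^2}=X+X^q$. Writing $r=X^{q-1}$ (so $X^{q^2}/X=1+r$), this gives
\[
g(X)=X^{2^k+1}\,r\,(1+r^{2^k-1}+r^{2^k}).
\]
To see $g\neq 0$ on $C_0\setminus\{0\}$, use that $X^e$ is a bijection and that $g(X)=0$ forces $X^{e(q-1)}=1$, i.e.\ $X\in\F_q$, which is impossible for $X\in C_0\setminus\{0\}$.

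Next, raise $g(X_1)=g(X_2)$ to the power $q-1$. Since $g(X)^q$ can again be rewritten using $X^{q^2}=X+X^q$, the ratio $g(X)^q/g(X)$ becomes a rational function $R(r)$. We then get $R(r)=R(s)$ with $s=X_2^{q-1}$.

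\textbf{Case $r=s$.} Then $X_2=cX_1$ with $c\in\F_q^*$, and $g(X_2)=c^{e}g(X_1)=c^{2^k+1}g(X_1)$. This forces $c^{2^k+1}=1$, so $c=1$ and $X_1=X_2$. That contradicts $w\neq 0$.

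\textbf{Case $r\ne s$.} Clear denominators in $R(r)=R(s)$ and divide by a suitable power of $r+s$. The aim is to reach an equation $Z^{2^k}+Z+1=0$ for some symmetric expression $Z\in\F_{q^3}$ in $r,s$, exactly as in Equation~\eqref{TEq6}. The contradiction then follows as before. Adding the $2^k$-th power to the equation gives $Z\in\F_{2^{\gcd(2k,3m)}}=\F_{2^{\gcd(k,3m)}}$, using $v_2(3m)\le v_2(k)$. Hence $Z^{2^k}=Z$, which is incompatible with $Z^{2^k}+Z+1=0$.

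The main obstacle is this last algebraic manipulation: computing $R(r)$ explicitly and factoring $R(r)+R(s)$ into the form $(r+s)^{N}\,(Z^{2^k}+Z+1)$. If a direct $Z$ does not appear, I would first try the substitution $X\mapsto X^q$ on $C_0$. This is a bijection of $C_0$, and it transforms $X^{2^k+q}$ into $X^{2^kq+q^2}=X^{2^kq}(X+X^q)$. That may let me reduce the injectivity of $g$ directly to the injectivity statement already proved for Theorem~\ref{T}.
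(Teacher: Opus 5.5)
Everything you actually carry out is correct. This covers the closed form $f_2(u+X)=X^{2^k+q}+u^{2^k+1}$, the injectivity of $X^e$ via $e\equiv q(2^{k+2m}+1)$ together with your $2$-adic check, the factorization of $g$, the nonvanishing $g\neq 0$ on $C_0\setminus\{0\}$, and the case $r=s$. Your necessity argument is also correct, and it is a genuine addition: the paper never proves the ``only if'' direction. The gap is the case $r\neq s$, which is the only nontrivial step of sufficiency. You do not compute $R$; you only name the target $Z^{2^k}+Z+1=0$ and hedge with a fallback, so the argument stops at its decisive point. (Also, the $Z$ that ends up working, $Z=(1+r+rs)/(r+s)$ from \eqref{TEq6}, is not symmetric: $r\leftrightarrow s$ sends it to $Z+1$.)

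The missing observation is that on $C_0$ your $g$ is a Frobenius twist of the function from Theorem~\ref{T}. Using $X^{q^3}=X$ and $X^{q^2}=X+X^q$,
\[
g(X)^q=X^{2^kq+q^2}+X^{2^kq^2+1}=X^{2^k+1}+X^{q(2^k+1)}.
\]
Hence $R(r)=\frac{1+r^{2^k+1}}{r+r^{2^k}+r^{2^k+1}}$ and $1/R(r)=1+\frac{1+r+r^{2^k}}{1+r^{2^k+1}}$. So $R(r)=R(s)$ is equivalent to the $r,s$-equation in \eqref{TEq4}, and the rest of the proof of Theorem~\ref{T} applies verbatim with the same $k$. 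More directly, $g(X_1)=g(X_2)$ holds iff $g(X_1)^q=g(X_2)^q$, and the latter reduces to the injectivity on $C_0$ already established in Theorem~\ref{T}. Your fallback $X\mapsto X^q$ points the right way, but it is the whole of $g$, via $g(X^q)=g(X)^q$, that becomes Theorem~\ref{T}'s function, not the monomial $X^{2^k+q}$ alone.

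The paper is more economical still: it replaces $f_2$ by $f_2^{q^2}$, which is exactly your congruence $e\equiv q(2^{k'}+1)$ with $k'=k+2m$. Raising $X_1^e+X_2^e=w$ to the $q^2$-th power gives $X_1^{2^{k'}+1}+X_2^{2^{k'}+1}=w$, i.e.\ \eqref{TEq2} with $k'$ in place of $k$. Theorem~\ref{T}'s argument then goes through because $v_2(3m)\le v_2(k')$, which your $2$-adic check already proves. You had the key congruence in hand but used it only for the injectivity of $X^e$.
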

\begin{proof}
It is easy to observe that
\begin{equation*}
    \begin{split}
        f_2(X) &= X^{2^k+q} +X^{2^k}(X+X^q+X^{q^2}) +X^q(X^{2^k}+ X^{2^kq}+ X^{2^kq^2})   \\
        &= X^{2^k+q}+X^{2^k}\Tr(X) + X^q\left(\Tr(X)\right)^{2^k}\\
        &= X^{2^k+q}+X^{2^k}\Tr(X) + X^q\left(\Tr(X)\right)^{2^k} + \left(\Tr(X)\right)^{2^k+q} + \left(\Tr(X)\right)^{2^k+q} \\
        &= \left(X+\Tr(X)\right)^{2^k+q} + \left(\Tr(X)\right)^{2^k+q}.
    \end{split}
\end{equation*}
It is easy to see that $f_2(X)$ is a permutation of $\F_{q^3}$ if and only if 
\begin{equation}
\begin{split}
    f_2'(X)&=(f_2(X))^{q^2}\\
    &=  \left(X+\Tr(X)\right)^{q^22^k+1} + \left(\Tr(X)\right)^{q^22^k+1} \\
     &=  \left(X+\Tr(X)\right)^{2^{k'}+1} + \left(\Tr(X)\right)^{2^k+1},
\end{split}
\end{equation}
where $k'=k+2m$, is a permutation of $\F_{q^3}$. We may note that for any $Y \in C_u$, we have a unique $ X \in C_0$ such that $Y=u+X$. Thus, we have
\begin{equation*}
    \begin{split}
        f_2'(Y) &= f_2'(u+ X)\\
        &= \left(u+ X+\Tr(u+ X)\right)^{2^{k'}+1} + \left(\Tr(u+ X)\right)^{2^k+1} \\
        &=  X^{2^{k'}+1} + u^{2^k+1}.
    \end{split}
\end{equation*}
We first assume that $\gcd(2^k+1, q-1)=1$. Our aim is to show that $f_2'$ is a permutation of $\F_{q^3}$. From Lemma~\ref{L1}, it is easy to see that
\begin{equation*}
    \begin{split}
        \gcd(2^k+1, q-1)=1  &\iff v_2(m) \leq v_2(k)\\
        &\iff v_2(3m) \leq v_2(k)\\
        &\iff v_2(3m) \leq v_2(k+2m)\\
        &\iff \gcd(2^{k'}+1, q^3-1)=1.
    \end{split}
\end{equation*}
Thus, $X^{2^{k'}+1}$ is injective on $\F_{q^3}$. Similarly to the previous theorem, the map $f_2'$ is injective on $C_u$ for all $u \in \F_q$. To prove the permutation property of $f_2'$, it is sufficient to show that the sets $f_2'(C_u):= \{f_2'(Y) \mid Y \in C_u \}$ are disjoint. On the contrary, let us assume that $Z \in f_2'(C_u) \cap f_2'(C_v)$ for some $u, v \in \F_q$ and $u \neq v$. Then, we have $Z= X_1^{2^{k'}+1} + u^{2^k+1}$ and $Z= X_2^{2^{k'}+1} + v^{2^k+1}$  for some $ X_1, X_2 \in C_0$. Therefore, we have
\begin{equation}\label{T1Eq1}
    X_1^{2^{k'}+1} + u^{2^k+1} = X_2^{2^{k'}+1} + v^{2^k+1}
        \iff X_1^{2^{k'}+1} + X_2^{2^{k'}+1}=u^{2^k+1}+v^{2^k+1}.
\end{equation}
Now, since $\gcd(2^k+1, q-1)=1$, we have $u^{2^k+1} + v^{2^k+1} := w  \in \F_q^*$. Similar to Theorem~\ref{T}, it can be shown that there does not exist any $(X_1, X_2) \in C_0 \times C_0$ such that 
\begin{equation}\label{T1Eq2}
    X_1^{2^{k'}+1} + X_2^{2^{k'}+1} =w.
\end{equation}
This completes the proof.
\end{proof}

Following the pattern of Theorem~\ref{T} and Theorem~\ref{T1}, it can be easily shown that the following pentanomial $f_3$  also induces a permutation of $\F_{q^3}$.

\begin{thm} \label{T2}
Let $m, k$ be positive integers with $q=2^m$. Then
\[
f_3(X) = X^{2^k+1} + X^{2^k+q} + X^{2^k+q^2}+ X^{q2^k+q^2}+ X^{q^22^k+q^2}
\]
is a permutation polynomial over $\F_{q^3}$ if and only if $\gcd(2^k+1, q-1)=1$.
\end{thm}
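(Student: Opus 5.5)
The plan is to reduce $f_3$ to the same shape as $f_1$ and $f_2$ and then reuse the argument of Theorem~\ref{T}. Grouping the monomials gives
\[
f_3(X) = X^{2^k}\Tr(X) + X^{q^2}\bigl(X^{2^kq}+X^{2^kq^2}\bigr) + X^{2^k+q^2} = X^{2^k}\Tr(X) + X^{q^2}\left(\Tr(X)\right)^{2^k} + X^{2^k+q^2},
\]
since $X^{2^kq}+X^{2^kq^2} = \left(\Tr(X)\right)^{2^k} + X^{2^k}$. Adding $\left(\Tr(X)\right)^{2^k+q^2}$ twice and using $\left(\Tr(X)\right)^{q^2}=\Tr(X)$, this becomes
\[
f_3(X) = \left(X+\Tr(X)\right)^{2^k+q^2} + \left(\Tr(X)\right)^{2^k+1}.
\]
As in Theorem~\ref{T1}, $f_3$ permutes $\F_{q^3}$ if and only if $f_3'(X) := (f_3(X))^q$ does. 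Write $k''=k+m$. Then
\[
f_3'(X) = \left(X+\Tr(X)\right)^{2^{k''}+1} + \left(\Tr(X)\right)^{2^k+1}.
\]
For $Y=u+X\in C_u$ with $X\in C_0$, this gives $f_3'(Y) = X^{2^{k''}+1} + u^{2^k+1}$.

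\textbf{Necessity.} Suppose $\gcd(2^k+1,q-1)>1$. For $x\in\F_q$ we have $\Tr(x)=3x=x$, so $f_3(x)=0+x^{2^k+1}$. Thus $f_3$ maps $\F_q$ into $\F_q$ via $x\mapsto x^{2^k+1}$, which is not injective on $\F_q$. Hence $f_3$ is not a permutation.

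\textbf{Sufficiency, exponent condition.} Assume $\gcd(2^k+1,q-1)=1$, i.e.\ $v_2(m)\le v_2(k)$ by Lemma~\ref{L1}. I check that $v_2(3m)=v_2(m)\le v_2(k+m)$. If $v_2(m)<v_2(k)$, then $v_2(k+m)=v_2(m)$. If $v_2(m)=v_2(k)$, then $v_2(k+m)>v_2(m)$. Hence, again by Lemma~\ref{L1}, $\gcd(2^{k''}+1,q^3-1)=1$.

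\textbf{Sufficiency, the Theorem~\ref{T} argument.} Injectivity on each $C_u$ is immediate, and disjointness of the images $f_3'(C_u)$ reduces to showing that $X_1^{2^{k''}+1}+X_2^{2^{k''}+1}=w$ has no solution in $C_0\times C_0$ for $w\in\F_q^*$. I would rerun the proof of Theorem~\ref{T} verbatim with $k$ replaced by $k''$, checking that each step uses only the facts just established:
\begin{enumerate}[(i)]
\item First show that $g(X)=X^{2^{k''}+1}+X^{q(2^{k''}+1)}$ is nonzero on $C_0\setminus\{0\}$.
\item Pass to $r=X_1^{q-1}$ and $s=X_2^{q-1}$.
\item In the case $r=s$, conclude $X_1=X_2$ using $\gcd(2^{k''}+1,q^3-1)=1$; then $X_1^{2^{k''}+1}+X_2^{2^{k''}+1}=0\neq w$.
\item In the case $r\ne s$, obtain $Z^{2^{k''}}+Z+1=0$ for the appropriate $Z\in\F_{q^3}$.
\end{enumerate}

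\textbf{Main obstacle.} The only delicate step is the final contradiction in (iv). From $Z^{2^{k''}}+Z+1=0$ one gets $Z\in\F_{2^{\gcd(2k'',3m)}}$. This requires $\gcd(2k'',3m)=\gcd(k'',3m)$, which holds precisely because $v_2(3m)\le v_2(k'')$ was verified above. Then $Z^{2^{k''}}=Z$, contradicting $Z^{2^{k''}}+Z+1=0$, and sufficiency follows.
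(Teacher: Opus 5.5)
Your proposal is correct and follows exactly the pattern the paper appeals to for Theorem~\ref{T2}, which it does not prove separately. That pattern is the identity $f_3(X)=(X+\Tr(X))^{2^k+q^2}+(\Tr(X))^{2^k+1}$, passage to $f_3^q$ with exponent $2^{k+m}+1$, the check $v_2(3m)\le v_2(k+m)$, and a rerun of the proof of Theorem~\ref{T}. Two small remarks: the middle expression of your first display carries a spurious trailing $+X^{2^k+q^2}$ (your right-hand side is the correct one), and your necessity argument via $f_3(x)=x^{2^k+1}$ on $\F_q$ is valid and supplies the converse direction that the paper never actually writes out, even for Theorems~\ref{T} and~\ref{T1}.
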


\section{Concluding Remarks}\label{S4}

In this paper, we proved a conjecture by Zhang et al.~\cite{Zhang2026} concerning permutation pentanomials. As an immediate consequence, we showed that the polynomials $f_1(X)$ and $f_2(X)$ in the conjecture belongs to a more general class of permutation polynomials of the form
\[
f(X)=\left(X+\Tr(X)\right)^{2^k+q^i} + \left(\Tr(X)\right)^{2^k+q^i},
\]
where $i \in \{0,1,2\}$ and $\gcd(2^k+1, q-1)=1$.

\end{document}